\theoremstyle{plain} 
\newtheorem{theorem}{Theorem}[section]
\newtheorem{lemma}{Lemma}[section]
\newtheorem{corollary}{Corollary}[section]
\theoremstyle{definition}
\theoremstyle{remark}
\newtheorem{remark}{Remark}[section]
\DeclareMathOperator{\divergenza}{div}
\renewcommand{\div}{\divergenza}
\newcommand{\R}{\mathbb{R}}
\newcommand{\C}{\mathbb{C}}
\newcommand{\N}{\mathbb{N}}
\newcommand{\normeq}[1]{{\left\vert\kern-0.25ex\left\vert\kern-0.25ex\left\vert #1 
    \right\vert\kern-0.25ex\right\vert\kern-0.25ex\right\vert}}
\lbrace\begin{array}{r@{\hspace{1mm}}ll}}%
\title{\textbf{Spectral enclosures for the damped elastic wave equation}}
\author[1]{Biagio Cassano}
\author[2]{Lucrezia Cossetti} 
\author[3]{Luca Fanelli}		
\affil[1]{Dipartimento di Matematica e Fisica, Università degli Studi
  della Campania  ``Luigi Vanvitelli", Viale Lincoln 5, 81100 Caserta, Italy; biagio.cassano@unicampania.it}
\affil[2]{Fakult\"{a}t f\"{u}r Mathematik, Institut f\"{u}r Analysis, Karlsruher Institut f\"{u}r Technologie (KIT), Englerstra{\ss}e 2, 76131 Karlsruhe, Germany; lucrezia.cossetti@kit.edu}
\affil[3]{Ikerbasque $\&$ Departamento de Matematicas, Universidad del Pa\'is Vasco/Euskal Herriko Unibertsitatea (UPV/EHU), Aptdo. 644, 48080, Bilbao, Spain; luca.fanelli@ehu.es}
\begin{document}

\date{\small 17 August 2021}

\maketitle



\begin{abstract}
	\noindent
	In this paper we investigate spectral properties of the damped elastic wave equation. Deducing a correspondence between the eigenvalue problem of this model and the one of Lamé operators with non self-adjoint perturbations, we provide quantitative bounds on the location of the point spectrum in terms of suitable norms of the damping coefficient.
\end{abstract}

\section{Introduction}
This paper is concerned with the \emph{damped elastic wave equation}
\begin{equation}\label{eq:damped}
	u_{tt} + a(x)u_t-\Delta^\ast u=0, 
	\qquad (x,t)\in \R^d\times (0,\infty), 
\end{equation}
Here $a\colon \R^d \to \C^{d\times d}$ denotes the damping coefficient assumed to be a (possibly) non hermitian matrix. We shall make the standard assumption of a bounded damping, \emph{i.e.} $a\in L^\infty(\R^d)^d.$ The symbol $-\Delta^\ast$ is used to denote the Lamé operator of elasticity which is a matrix-valued differential operator acting, w.r.t. the spacial variable $x\in \R^d$ on smooth vector fields as
\begin{equation}\label{eq:Lame}
	-\Delta^\ast u=-\mu\Delta u -(\lambda+\mu)\nabla \div u,
	\qquad 
	u\in C^\infty_0(\R^d)^d:=C^\infty_0(\R^d;\C^d).
\end{equation}   
The material-dependent Lamé parameters $\lambda, \mu\in \R$ are assumed to satisfy the ellipticity condition
\begin{equation}\label{eq:ellipticity}
	\mu>0, 
	\quad
	\lambda+ \mu\geq 0.
\end{equation}

\noindent
It is customarily to write the second-order evolution system~\eqref{eq:damped} as a doubled first-order system introducing the vector field $U=(u, u_t)^{T}.$ Then~\eqref{eq:damped} can be rewritten as $U_t=A^\ast_a U,$ where $A_a^\ast$ is the $2d\times 2d$ matrix-valued \emph{damped elastic wave operator} defined as
\begin{equation}\label{eq:damped-operator}
	A_a^\ast:=
	\begin{pmatrix}
		0 &1\\
		\Delta^\ast & -a
	\end{pmatrix},
	\qquad \mathcal{D}(A_a^\ast):=H^2(\R^d)^d\times \dot{H}^1(\R^d)^d. 
\end{equation}   
The damped elastic wave equation~\eqref{eq:damped} and the corresponding damped operator~\eqref{eq:damped-operator} have attracted considerable attention in the last decades. In the constant coefficient case, namely $a(x)=\alpha,$ $\alpha>0,$ Bocanegra-Rodr\'iguez et al.~\cite{B-R2020} considered the longtime dynamics of this semilinear model in the presence of nonlinear structural forcing terms and external forces: they proved well-posedness \emph{\`a la} Hadamard and established the existence of finite dimensional global attractors together with the upper semicontinuity thereof. Energy decay results in relation with stability properties of solutions to this elastic model have been also deeply investigated. In~\cite{BD2013} Bchatnia and Daoulatli obtained a general energy decay estimate in a three dimensional bounded domain in the presence of localized nonlinear damping and an external force. By adding viscoelastic dissipation of memory type Bchatnia and Guesmia~\cite{BG2014} established a more general energy decay. Different viscoelastic dissipations have been considered in~\cite{Mustafa1,Mustafa2}.
Strong stability of Lamé systems with fractional order boundary damping were studied by Benaissa and Gaouar in~\cite{BG2019}.

For the undamped elastic wave equation, more commonly known as \emph{Navier equation}, a more varied bibliography is available. In~\cite{B_F-G_P-E_R_V} Barcel\'o et al.~proved uniform resolvent estimates (Limiting Absorption Principle) for this model. With this stationary tool at hands they also proved a priori averaged estimates for the corresponding Cauchy problem. The resolvent estimates in~\cite{B_F-G_P-E_R_V} were generalized in~\cite{Cossetti2019} and then improved in~\cite{KLS2021}, where a sharp result (analogous to the one available for the Laplacian~\cite{KL2020}) was proved. Surprisingly, differently from the Laplacian, in~\cite{KLS2021} the authors also showed the failure of uniform Sobolev and Carleman inequalities for the Lamé operator. In~\cite{KKLS2021} it was proved that if spacial lower-order  perturbations are replaced by temporal ones, \emph{i.e.} if one considers the damped equation, then those estimates become available again. In~\cite{B_F_R_V_V} the authors generalized the results in~\cite{B_F-G_P-E_R_V} proving Agmon-H\"ormander type estimates of the Navier equation when this is perturbed by small 0-th order matrix-valued potential. From these results Strichartz estimates for the evolution equation followed (in the same manner as for classical wave equation, see~\cite{BPST2003,BPST2004}). These Strichartz estimates were then generalized in~\cite{KKS2021,KKLS2021}. In particular in~\cite{KKLS2021} the endpoint case is deduced. 

The Navier equation got also attention of the inverse problem's community. In particular, inverse scattering was studied in~\cite{BFPRV2012,BFPRV2018}, whereas inverse boundary problems were considered in~\cite{NU1994,ER2002,BFV2014,IY2015,BFPRV2018}. Boundary determination of Lamé parameters has been studied in~\cite{CLLZ2021,LN2017,Tartar1997}.

\medskip
\noindent
In this paper we are interested in spectral properties of the damped elastic wave equation~\eqref{eq:damped}, or equivalently of the elastic wave operator~\eqref{eq:damped-operator}. More precisely, we aim at deducing quantitative bounds on the location of the point spectrum of $A_a^\ast$ in terms of suitable norms of the damping coefficient. In order to do that we establish a correspondence (see Lemma~\ref{lemma:correspondence}) between the eigenvalue problem associated to~\eqref{eq:damped-operator} and the one corresponding to suitable Lamé operators with non self-adjoint perturbations, that is operators of the form
\begin{equation}\label{eq:perturbed_Lame}
	-\Delta^\ast + V,
\end{equation} 
where $V$ denotes the operator of multiplication by a (possibly) non hermitian matrix-valued function $V\colon \R^d \to \C^{d\times d}$. 

The study of the spectrum of~\eqref{eq:perturbed_Lame} has already a bibliography. It is well known that the free Lamé operator $-\Delta^\ast$ is self-adjoint on $H^2(\R^d)^d$ and $\sigma(-\Delta^\ast)=\sigma_{\textup{ac}}(-\Delta^\ast)=[0,\infty).$ It is a natural question~\cite{Cossetti2017,Cossetti2019,CCF2021,
KLS2021} to ask whether and how the spectrum changes under 0th-order perturbations, \emph{i.e.} considering the operator~\eqref{eq:perturbed_Lame}. In~\cite{Cossetti2017}, adapting to the elasticity setting the method of multipliers developed for non self-adjoint Schrödinger operators in~\cite{FKV2018} (see also~\cite{F_K_V2} for similar problems on the plane), the author showed that the point spectrum of the perturbed Lamé operator~\eqref{eq:perturbed_Lame} remains empty (as in the free case) under suitable variational small perturbations (inverse-square Hardy potential with small coupling constant is covered). Later, in~\cite{CCF2021} we showed that full spectral stability, \emph{i.e.} $\sigma(-\Delta^\ast +V)=\sigma(-\Delta^\ast)=[0,\infty),$ can be proved in three dimensions $d=3$ under perturbations which satisfy a smallness condition of Hardy-type (see~\cite[Thm.~1.4]{CCF2021}). Focusing on the point spectrum only, if no stability can be proved a priori, an interesting question is related to provide quantitative bounds on the location in the complex plane of this part of the spectrum which, in the perturbed setting, is possibly no longer empty. In this direction, some preliminary result valid for the \emph{discrete} spectrum can be found in~\cite{Cossetti2019} (see also~\cite{KLS2021}). Later, these results have been extended in~\cite{CCF2021} to cover \emph{embedded} eigenvalues as well. More precisely in~\cite{CCF2021} the following result was proved.
\begin{theorem}[Thm.~1.1,~\cite{CCF2021}]\label{thm:Lp-result}
	Let $d\geq 2,$ $0<\gamma\leq 1/2$ if $d=2$ and $0\leq \gamma\leq 1/2$ if $d\geq 3$ and $V\in L^{\gamma + \frac{d}{2}}(\R^d;\C^{d\times d}).$ Then there exists a universal constant $c_{\gamma, d, \lambda, \mu}>0$ independent on $V$ such that
	\begin{equation}\label{eq:thesis}
		\sigma_\textup{p}(-\Delta^\ast + V)
		\subset \left\{z\in \C\colon |z|^\gamma \leq c_{\gamma, d, \lambda, \mu} \| V \|_{L^{\gamma + \frac{d}{2}}(\R^d)}^{\gamma + \frac{d}{2}}\right \}. 
	\end{equation}
\end{theorem}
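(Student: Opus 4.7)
My plan is to mimic the Birman-Schwinger strategy developed by Frank for non self-adjoint Schrödinger operators, with the crucial ingredient---a uniform Sobolev (Kenig-Ruiz-Sogge) estimate for the free resolvent---transferred from the Laplacian to the free Lamé operator via the Helmholtz decomposition. The starting observation is that $L^2(\R^d)^d$ splits orthogonally into the divergence-free subspace $\mathscr{H}_\sigma$ and the curl-free subspace $\mathscr{H}_\pi$, and the free Lamé operator reduces to $-\mu\Delta$ on $\mathscr{H}_\sigma$ and to $-(\lambda+2\mu)\Delta$ on $\mathscr{H}_\pi$. Denoting by $P_\sigma, P_\pi$ the corresponding Helmholtz projectors (which are matrices of Riesz-type singular integrals, hence bounded on $L^p(\R^d)^d$ for every $1<p<\infty$), the free resolvent decomposes as
\begin{equation*}
R_0(z) := (-\Delta^\ast - z)^{-1} = P_\sigma (-\mu\Delta - z)^{-1} P_\sigma + P_\pi(-(\lambda+2\mu)\Delta - z)^{-1} P_\pi.
\end{equation*}
Coupling this with the scalar Kenig-Ruiz-Sogge estimate $\|(-c\Delta-z)^{-1}\|_{L^p\to L^{p'}} \leq C\,|z|^{\frac{d}{2}(\frac{1}{p}-\frac{1}{p'})-1}$, valid uniformly in $z\in \C\setminus[0,\infty)$ for $\frac{2}{d+1}\leq \frac{1}{p}-\frac{1}{p'}\leq \frac{2}{d}$ (with $p<\infty$ in $d=2$), one obtains the same uniform bound for $R_0(z)$, with a constant depending in addition on $\lambda$ and $\mu$.

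Next I would carry out a matrix-valued Birman-Schwinger reduction. Using the polar decomposition $V=U|V|$ with $|V|=(V^\ast V)^{1/2}$, factorize $V=BA$ by setting $A:=|V|^{1/2}$ and $B:=U|V|^{1/2}$. If $z\in\sigma_\textup{p}(-\Delta^\ast+V)\setminus[0,\infty)$ with eigenfunction $u$, then $\psi:=Au \in L^2(\R^d)^d$ satisfies $\psi = -A R_0(z) B \psi$, so that the Birman-Schwinger operator $K(z):=AR_0(z)B$ must have $\|K(z)\|_{L^2\to L^2}\geq 1$. Now choose the dual pair $p,p'$ with $\frac{1}{p}-\frac{1}{p'}=\frac{1}{\gamma+d/2}$, which is precisely inside the admissibility range above exactly when $0\leq\gamma\leq 1/2$ for $d\geq 3$ and $0<\gamma\leq 1/2$ for $d=2$. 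Hölder's inequality yields $\|A\|_{L^2\to L^p},\,\|B\|_{L^{p'}\to L^2}\leq \|V\|_{L^{\gamma+d/2}}^{1/2}$, and combining with the resolvent bound gives
\begin{equation*}
\|K(z)\|_{L^2\to L^2} \leq \|V\|_{L^{\gamma+d/2}}\,\|R_0(z)\|_{L^p\to L^{p'}}\leq C_{\gamma,d,\lambda,\mu}\,\|V\|_{L^{\gamma+d/2}}\,|z|^{-\frac{2\gamma}{d+2\gamma}}.
\end{equation*}
Requiring the left-hand side to be at least $1$ and rearranging yields $|z|^\gamma\leq c_{\gamma,d,\lambda,\mu}\,\|V\|_{L^{\gamma+d/2}}^{\gamma+d/2}$, which is~\eqref{eq:thesis}.

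The main obstacle I expect is the treatment of embedded eigenvalues $z\in[0,\infty)$, where $R_0(z)$ does not exist as a bounded operator on $L^2$. The natural substitute is the boundary value $R_0(z\pm i0)$ furnished by the limiting absorption principle for $-\Delta^\ast$ proved in~\cite{B_F-G_P-E_R_V}. One then needs to verify that the uniform $L^p\to L^{p'}$ bound persists in this limit (this is actually built into the Kenig-Ruiz-Sogge theorem, and transports to $-\Delta^\ast$ through the same Helmholtz argument) and that the eigenfunction satisfies $Au\in L^2$, which can be extracted from a weighted $L^2$ decay estimate on embedded eigenfunctions combined with the integrability of $V$. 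A density/approximation argument then recovers the identity $\psi = -AR_0(z\pm i0)B\psi$, after which the quantitative estimate above closes the proof uniformly in $z$. The case $\gamma=0$ in $d\geq 3$ requires no further input: the bound $\|K(z)\|\leq C\,\|V\|_{L^{d/2}}$ becomes $z$-independent and forces $\sigma_\textup{p}(-\Delta^\ast+V)=\emptyset$ whenever $\|V\|_{L^{d/2}}$ is smaller than an explicit constant, which is precisely the content of~\eqref{eq:thesis} in that limit.
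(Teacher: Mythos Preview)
The present paper does not contain a proof of this theorem: it is quoted verbatim from~\cite{CCF2021} (and, for the discrete spectrum, from~\cite{Cossetti2019}) and used as a black box to derive Theorem~\ref{thm:main-nsa}. There is therefore no proof in the paper to compare your proposal against.

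That said, your sketch is essentially the argument carried out in the cited references. The Helmholtz decomposition reducing $(-\Delta^\ast-z)^{-1}$ to two scaled Laplacian resolvents, the transfer of the Kenig--Ruiz--Sogge uniform $L^p\to L^{p'}$ bound through the $L^p$-boundedness of the Leray/Helmholtz projectors, and the matrix Birman--Schwinger reduction with the polar factorization $V=U|V|^{1/2}\cdot|V|^{1/2}$ are exactly the ingredients of~\cite{Cossetti2019}; the extension to embedded eigenvalues via the limiting absorption principle is what~\cite{CCF2021} adds. Your exponent bookkeeping is correct: $\tfrac{1}{p}-\tfrac{1}{p'}=\tfrac{2}{2\gamma+d}$ lands in the admissible KRS range precisely for the stated $\gamma$, and raising the Birman--Schwinger inequality to the power $\gamma+\tfrac{d}{2}$ yields~\eqref{eq:thesis}.

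One point deserves care. For embedded eigenvalues the step ``$\psi=Au\in L^2$'' is not automatic, and the identity $\psi=-AR_0(z\pm i0)B\psi$ does not follow from the resolvent equation as in the case $z\notin[0,\infty)$. In~\cite{CCF2021} this is handled by an approximation $z+i\varepsilon\to z$ combined with the uniform resolvent bound and a separate argument guaranteeing that embedded eigenfunctions lie in the right weighted space; your proposal flags this correctly but leaves it at the level of a plan. If you intend this as a self-contained proof, that passage needs to be written out.
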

\noindent
In the self-adjoint case, namely for real-valued perturbations, the result above holds for a larger class of indices $\gamma.$ More precisely, the following result holds true.
\begin{theorem}[Thm.~3.1,~\cite{Cossetti2019}]\label{thm:eigenvalue_bound_self}
	Let $d\geq 2,$ $\gamma>0$ if $d=2$ and $\gamma\geq 0$ if $d\geq3$ and $V\in L^{\gamma + \frac{d}{2}}(\R^d;\R).$ Then there exists a universal constant $c_{\gamma, d, \lambda, \mu}>0$ independent of $V$ such that any negative eigenvalue $z$ (if any) of the self-adjoint perturbed Lamé operator $-\Delta^\ast + VI_{\R^d}$ satisfies
	\begin{equation}\label{SA_bound_Lame}
		|z|^\gamma\leq 
		c_{\gamma, d, \lambda, \mu} \| V_- \|_{L^{\gamma + \frac{d}{2}}(\R^d)}^{\gamma + \frac{d}{2}},
	\end{equation}
	where $V_-$ is the negative part of $V,$ \emph{i.e.} $V_-(x):=\max\{-V(x),0\}.$
\end{theorem}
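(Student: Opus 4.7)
My plan is to follow the classical Keller--Lieb--Thirring strategy, adapted to the Lam\'e setting through the ellipticity condition~\eqref{eq:ellipticity}. Let $z<0$ be a negative eigenvalue of the self-adjoint operator $-\Delta^\ast+VI_{\R^d}$ (defined on $H^2(\R^d)^d$) with eigenfunction $u\in H^2(\R^d)^d\setminus\{0\}$. Testing the eigenvalue equation against $u$ and using the quadratic form identity
\begin{equation*}
\langle -\Delta^\ast u, u\rangle_{L^2} = \mu\norm{\nabla u}_{L^2}^2 + (\lambda+\mu)\norm{\div u}_{L^2}^2,
\end{equation*}
together with $\lambda+\mu\ge 0$ and the reality of $V$, the first step would produce the basic energy inequality
\begin{equation*}
\mu\norm{\nabla u}_{L^2}^2 + |z|\,\norm{u}_{L^2}^2 \le \int_{\R^d}V_-(x)\,|u(x)|^2\,dx,
\end{equation*}
which is the elastic counterpart of the standard Schr\"odinger bound for negative eigenvalues.

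The next step would be to estimate the right-hand side by H\"older with exponents $p:=\gamma+d/2$ and $p'=p/(p-1)$, yielding $\int V_-|u|^2\le \norm{V_-}_{L^p}\norm{u}_{L^{2p'}}^2$, and then to interpolate via Gagliardo--Nirenberg--Sobolev:
\begin{equation*}
\norm{u}_{L^{2p'}}^2 \le C\norm{\nabla u}_{L^2}^{2\alpha}\norm{u}_{L^2}^{2(1-\alpha)}, \qquad \alpha=\frac{d}{2\gamma+d}\in(0,1].
\end{equation*}
For $\gamma>0$ (so $\alpha<1$), Young's inequality with the dual exponents $(2\gamma+d)/d$ and $(2\gamma+d)/(2\gamma)$ would split the right-hand side into $\epsilon\norm{\nabla u}_{L^2}^2$ plus $C_\epsilon\norm{V_-}_{L^p}^{p/\gamma}\norm{u}_{L^2}^2$; choosing $\epsilon=\mu/2$ to absorb the gradient into the left-hand side and dividing through by $\norm{u}_{L^2}^2$ would give $|z|\le C\norm{V_-}_{L^p}^{p/\gamma}$, which is exactly~\eqref{SA_bound_Lame} after raising to the power $\gamma$.

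The remaining endpoint case $\gamma=0$, $d\ge 3$, in which $\alpha=1$ and Young's inequality degenerates, I would handle by inserting the Sobolev embedding $\dot H^1(\R^d)\hookrightarrow L^{2d/(d-2)}(\R^d)$ directly into the basic energy inequality, producing $(\mu-C\norm{V_-}_{L^{d/2}})\norm{\nabla u}_{L^2}^2+|z|\norm{u}_{L^2}^2\le 0$; this says that a negative eigenvalue can exist only if $\norm{V_-}_{L^{d/2}}^{d/2}\ge c^{-1}$, matching~\eqref{SA_bound_Lame} for $\gamma=0$. I do not expect a genuine obstacle here: once~\eqref{eq:ellipticity} is used to bound the Lam\'e quadratic form from below by $\mu\norm{\nabla u}_{L^2}^2$, the rest of the argument is essentially the scalar Keller--Lieb--Thirring machinery applied component-wise, with no dependence of the resulting constant on $\lambda+\mu$. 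The only mildly delicate point is legitimizing the integration by parts for $u\in H^2(\R^d)^d$, which follows from a standard density approximation by $C^\infty_0(\R^d)^d$.
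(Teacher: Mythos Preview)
The paper does not contain a proof of this statement: Theorem~\ref{thm:eigenvalue_bound_self} is quoted verbatim as Theorem~3.1 of~\cite{Cossetti2019} and then used as a black box in the proof of Theorem~\ref{thm:main-sa}. So there is no ``paper's own proof'' to compare against here.

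That said, your argument is correct and self-contained. Once you use the ellipticity~\eqref{eq:ellipticity} to obtain the form lower bound $\langle -\Delta^\ast u,u\rangle\ge \mu\norm{\nabla u}_{L^2}^2$, the problem reduces to the scalar Keller--Lieb--Thirring argument for $-\mu\Delta+V$ on vector-valued functions, and your H\"older/Gagliardo--Nirenberg/Young chain with $\alpha=d/(2\gamma+d)$ is the standard way to close it. The endpoint treatment $\gamma=0$, $d\ge3$ via the critical Sobolev embedding is also the right move. A minor observation: your proof actually yields a constant depending only on $\gamma,d,\mu$ (not on $\lambda$), which is a slight sharpening of the stated $c_{\gamma,d,\lambda,\mu}$; this is consistent with the fact that the $(\lambda+\mu)\norm{\div u}_{L^2}^2$ term is simply discarded. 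The approach in~\cite{Cossetti2019} goes instead through the Helmholtz decomposition, which diagonalises $-\Delta^\ast$ into two Laplacians with coefficients $\mu$ and $\lambda+2\mu$ and then invokes the classical scalar Lieb--Thirring bounds; your direct variational route is more elementary and avoids that decomposition, at the (harmless) cost of not tracking any possible improvement coming from the larger coefficient $\lambda+2\mu$ on the gradient part.
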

\noindent
Making use of Theorem~\ref{thm:Lp-result} and Theorem~\ref{thm:eigenvalue_bound_self} and the correspondence between the eigenvalue problem associated to the damped elastic wave operator and the one of the perturbed Lamé operator~\eqref{eq:perturbed_Lame} (see Lemma~\ref{lemma:correspondence} below) we shall prove the following two results valid in the self-adjoint and the non self-adjoint setting.
\begin{theorem}\label{thm:main-sa}
	Let $d\geq 2$ and assume $\gamma$ satisfies the hypotheses of Theorem~\ref{thm:eigenvalue_bound_self} and $a\in L^\infty(\R^d;\R).$ Then there exists a universal constant $c_{\gamma,d,\lambda, \mu}>0$ independent of the damping $a$ such that for any positive (respectively negative) eigenvalue $z$ of the damped elastic wave operator $A_a^\ast$ and $a_-\in L^{\gamma+ \frac{d}{2}}(\R^d)$ (respectively $a_+\in L^{\gamma+ \frac{d}{2}}(\R^d)$) satisfies
	\begin{equation}\label{eq:bound-sa}
		(\pm z)^{\gamma- \frac{d}{2}}\leq c_{\gamma,d,\lambda, \mu} \| a_{\mp} \|_{L^{\gamma + \frac{d}{2}}(\R^d)}^{\gamma + \frac{d}{2}},
	\end{equation}
\end{theorem}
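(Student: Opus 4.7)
The plan is to reduce the spectral problem for the matrix evolution operator $A_a^\ast$ to the stationary eigenvalue problem for a perturbed Lamé operator of the form~\eqref{eq:perturbed_Lame} with a specifically chosen potential $V$, and then invoke Theorem~\ref{thm:eigenvalue_bound_self}. Concretely, the correspondence of Lemma~\ref{lemma:correspondence} translates the eigenvalue equation $A_a^\ast \Psi=z\Psi$ with $\Psi=(\psi_1,\psi_2)^T$ into the two relations $\psi_2=z\psi_1$ and $\Delta^\ast\psi_1-a\psi_2=z\psi_2$, which combine to give $(-\Delta^\ast+z\,a)\psi_1=-z^2\psi_1$. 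Hence every eigenvalue $z$ of $A_a^\ast$ produces an eigenvalue $-z^2$ of the perturbed Lamé operator $-\Delta^\ast+V$ with the specific, $z$-dependent potential $V=z\,a$.

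Since by assumption $a$ is real-valued (hermitian) and the eigenvalue $z\in\R\setminus\{0\}$ in the hypothesis, the potential $V=z\,a$ is itself real-valued and the operator $-\Delta^\ast+z\,a$ is self-adjoint on $H^2(\R^d)^d$; moreover, the associated eigenvalue $-z^2$ is strictly negative, which is exactly the regime covered by Theorem~\ref{thm:eigenvalue_bound_self}. Applying that theorem with the potential $V=z\,a$ yields
\begin{equation*}
|-z^2|^{\gamma}=|z|^{2\gamma}\leq c_{\gamma,d,\lambda,\mu}\,\bigl\|(z\,a)_-\bigr\|_{L^{\gamma+\frac{d}{2}}(\R^d)}^{\gamma+\frac{d}{2}}.
\end{equation*}

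The final step is to unwind the negative part $(z\,a)_-$ in terms of $a_{\pm}$ according to the sign of $z$. If $z>0$ is a positive eigenvalue, then $(z\,a)_-=z\,a_-$ pointwise (in the sense of the spectral decomposition of the hermitian matrix $a(x)$), so that $\|(z\,a)_-\|_{L^{\gamma+d/2}}=z\,\|a_-\|_{L^{\gamma+d/2}}$; substituting into the previous display and rearranging the power $|z|^{2\gamma-\gamma-d/2}=z^{\gamma-d/2}$ produces exactly~\eqref{eq:bound-sa} with the upper sign. The case $z<0$ is symmetric: $(z\,a)_-=|z|\,a_+$, and the same algebraic manipulation delivers the bound with the lower sign.

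The argument is essentially a bookkeeping exercise once the correspondence lemma is in place, and I do not expect a genuine analytic obstacle in the self-adjoint regime; the only points that require minor care are verifying that the eigenvector $\psi_1$ produced by the correspondence lies in the domain $H^2(\R^d)^d$ on which Theorem~\ref{thm:eigenvalue_bound_self} is stated (which follows from $\Psi\in\mathcal D(A_a^\ast)$ and the relation $\psi_2=z\psi_1$), and tracking signs carefully when decomposing $(z\,a)_-$ through the spectral calculus of the hermitian matrix $a(x)$. In contrast with the non self-adjoint counterpart, here no control of complex eigenvalues is required, since the statement restricts a priori to real positive or negative $z$.
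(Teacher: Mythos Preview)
Your proof is correct and follows essentially the same route as the paper: invoke Lemma~\ref{lemma:correspondence} to pass from $z\in\sigma_\textup{p}(A_a^\ast)$ to $-z^2\in\sigma_\textup{p}(-\Delta^\ast+za)$, apply Theorem~\ref{thm:eigenvalue_bound_self} to obtain $|z|^{2\gamma}\le c_{\gamma,d,\lambda,\mu}\|(za)_-\|_{L^{\gamma+d/2}}^{\gamma+d/2}$, then split $(za)_-$ according to the sign of $z$ and divide by $|z|^{\gamma+d/2}$ (using $z\neq0$ from Remark~\ref{rmk:ev0}). The only cosmetic point is that here $a\in L^\infty(\R^d;\R)$ is scalar, so the remark about the ``spectral decomposition of the hermitian matrix $a(x)$'' is unnecessary.
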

\noindent
Setting $\gamma=d/2$ in~\eqref{eq:bound-sa}, the previous theorem provides sufficient condition on the size of the damping coefficient to guarantee absence of positive (respectively negative) eigenvalues.
\begin{corollary}\label{cor:absence-sa}
	If $d\geq 2$ and 
	\begin{equation*}
	c_{\frac{d}{2},d,\lambda, \mu} \|a_\mp\|_{L^d(\R^d)}^{d}<1,
	\end{equation*}
	then $A_a^\ast$ has no positive (respectively negative) eigenvalues.
\end{corollary}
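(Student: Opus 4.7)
The plan is to deduce the corollary as a direct consequence of Theorem~\ref{thm:main-sa} by specializing the exponent $\gamma$ to $d/2$ and arguing by contrapositive. First, I would verify that the choice $\gamma=d/2$ is admissible in the hypotheses of Theorem~\ref{thm:eigenvalue_bound_self}: for $d=2$ we need $\gamma>0$, which is satisfied since $d/2=1>0$, and for $d\geq 3$ we only need $\gamma\geq 0$, which is clearly true. Thus Theorem~\ref{thm:main-sa} applies with this choice.

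Next, I would observe that with $\gamma=d/2$ the exponents in~\eqref{eq:bound-sa} simplify in a convenient way: $\gamma-d/2=0$ and $\gamma+d/2=d$, so that $(\pm z)^{\gamma-d/2}=1$ for any nonzero eigenvalue, and the $L^p$-integrability assumption reduces to $a_\mp\in L^d(\R^d)$, which is ensured by the standing assumption $a\in L^\infty(\R^d;\R)$ combined with the $L^d$-hypothesis in the statement (in fact the corollary's hypothesis $\|a_\mp\|_{L^d}<\infty$ is built into the assumption). Consequently, Theorem~\ref{thm:main-sa} yields that every positive (respectively negative) eigenvalue $z$ of $A_a^\ast$ must satisfy
\begin{equation*}
    1 \leq c_{\frac{d}{2},d,\lambda,\mu}\,\|a_\mp\|_{L^d(\R^d)}^{d}.
\end{equation*}

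Finally, I would take the contrapositive: if the size condition $c_{\frac{d}{2},d,\lambda,\mu}\|a_\mp\|_{L^d(\R^d)}^{d}<1$ is assumed, the inequality above cannot hold for any positive (respectively negative) $z$, hence no such eigenvalue exists. Since the argument is a one-line specialization plus contrapositive of an already-established theorem, I do not anticipate any substantial obstacle; the only point requiring a moment of care is checking that $\gamma=d/2$ lies in the admissible range in all dimensions $d\geq 2$, which, as noted above, is immediate.
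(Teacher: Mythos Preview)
Your proposal is correct and follows exactly the approach indicated in the paper: the corollary is obtained by setting $\gamma=d/2$ in Theorem~\ref{thm:main-sa}, noting that this choice is admissible for all $d\geq 2$, and reading off the contrapositive of the resulting inequality $1\leq c_{\frac{d}{2},d,\lambda,\mu}\|a_\mp\|_{L^d}^d$.
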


\noindent
In the non self-adjoint setting we shall prove the following result. 
\begin{theorem}\label{thm:main-nsa}
	Let $d\geq 2$ and assume $\gamma$ satisfies the hypotheses of Theorem~\ref{thm:Lp-result} and $a\in L^\infty(\R^d;\C^{d\times d})$ is a (possibly) non hermitian matrix. Then there exists a universal constant $c_{\gamma,d,\lambda,\mu}>0$ independent of the damping $a$ such that
\begin{equation}\label{eq:bound-nsa}
	\sigma_{\textup{p}}(A_a^\ast)\subset 
	\Big\{
	z\in \C\colon |z|^{\gamma-\frac{d}{2}}
	\leq c_{\gamma, d,\lambda, \mu} \|a\|_{L^{\gamma+ \frac{d}{2}}}^{\gamma+ \frac{d}{2}}
	\Big\}.
\end{equation}	
\end{theorem}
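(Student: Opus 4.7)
The plan is to derive Theorem~\ref{thm:main-nsa} from Theorem~\ref{thm:Lp-result} by means of Lemma~\ref{lemma:correspondence}, which will convert the eigenvalue problem for $A_a^\ast$ into one for a perturbed Lamé operator $-\Delta^\ast + V$ whose potential depends \emph{linearly} on the spectral parameter.

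First, I would take $z\in\sigma_{\textup{p}}(A_a^\ast)$ with nontrivial eigenvector $(u,v)^T\in H^2(\R^d)^d\times \dot{H}^1(\R^d)^d$ and unfold the eigenvalue equation $A_a^\ast(u,v)^T = z(u,v)^T$ into the two relations $v=zu$ and $\Delta^\ast u - av = zv$. Substituting the first into the second yields
\[
(-\Delta^\ast + za)\,u = -z^2\,u,
\]
so that $-z^2$ is an eigenvalue of the perturbed Lamé operator $-\Delta^\ast + V$ with matrix potential $V:=za$, and $u$ is a nontrivial eigenfunction (otherwise $v=zu$ would vanish too, contradicting nontriviality of the pair). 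Assuming $a\in L^{\gamma+d/2}(\R^d;\C^{d\times d})$ (otherwise~\eqref{eq:bound-nsa} is vacuous), the potential $V=za$ lies in $L^{\gamma+d/2}$ with $\|V\|_{L^{\gamma+d/2}}=|z|\,\|a\|_{L^{\gamma+d/2}}$. At this point I would invoke Theorem~\ref{thm:Lp-result}, which gives
\[
|z|^{2\gamma} = |{-z^2}|^\gamma \leq c_{\gamma,d,\lambda,\mu}\,\|V\|_{L^{\gamma+d/2}}^{\gamma+d/2} = c_{\gamma,d,\lambda,\mu}\,|z|^{\gamma+d/2}\,\|a\|_{L^{\gamma+d/2}}^{\gamma+d/2}.
\]
Dividing by $|z|^{\gamma+d/2}$ (permitted provided $z\neq 0$) yields exactly the bound in~\eqref{eq:bound-nsa}.

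The only delicate point concerns the case $z=0$: since the hypotheses on $\gamma$ force $\gamma-d/2<0$, the set on the right-hand side of~\eqref{eq:bound-nsa} does not contain the origin, so $z=0$ must be excluded independently. This is however automatic, since for $z=0$ the eigenvalue equation collapses to $v=0$ and $\Delta^\ast u = 0$, whence $u\equiv 0$ in $H^2(\R^d)^d$. Beyond this edge case, the argument presents no real obstacle: the substantive content is packaged into Lemma~\ref{lemma:correspondence}, and the linear-in-$z$ scaling of the effective potential $V=za$ is precisely what converts the Lamé exponent $\gamma$ appearing on the left-hand side of~\eqref{eq:thesis} into the damped-wave exponent $\gamma-d/2$ of~\eqref{eq:bound-nsa}.
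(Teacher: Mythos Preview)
Your proof is correct and follows essentially the same route as the paper's: invoke Lemma~\ref{lemma:correspondence} to pass from $z\in\sigma_{\textup{p}}(A_a^\ast)$ to $-z^2\in\sigma_{\textup{p}}(-\Delta^\ast+za)$, apply Theorem~\ref{thm:Lp-result} with $V=za$, and divide through by $|z|^{\gamma+d/2}$. The only differences are cosmetic---you spell out the content of Lemma~\ref{lemma:correspondence} rather than just citing it, and you treat the $z=0$ case explicitly where the paper appeals to Remark~\ref{rmk:ev0}.
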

\begin{remark}
	Notice that in the non self-adjoint case, due to the more restrictive class of indices for which Theorem~\ref{thm:Lp-result} is valid compared to Theorem~\ref{thm:eigenvalue_bound_self}, no analogous of Corollary~\ref{cor:absence-sa} holds true ($\gamma=d/2$ is not admissible).
\end{remark}

\medskip
\noindent
The main motivation behind our project relies on the following simple observation: the ellipticity condition~\eqref{eq:ellipticity} allows taking $\lambda+ \mu=0$ in the definition of the Lamé operator~\eqref{eq:Lame}. This choice turns the Lamé operator~\eqref{eq:Lame} into a vector Laplacian and consequently the damped elastic wave equation~\eqref{eq:damped} into a system of classical damped wave equations. For the (scalar) damped wave equation, results in the spirit of Theorem~\ref{thm:main-sa} and Theorem~\ref{thm:main-nsa} have been recently proved in~\cite{KK2020}. Thus, Theorem~\ref{thm:main-sa} and Theorem~\ref{thm:main-nsa} can be seen as a generalization of the results in~\cite[Thm.~1, Thm.~5 and Thm.~6]{KK2020} in the sense that they recover\footnote{the constants involved slightly differ due to the presence of the coefficient $\mu$ of the vector Laplacian and due to the vectorial form of the wave equation once $\lambda+ \mu=0$ in~\eqref{eq:damped}.} them when $\lambda+ \mu=0.$    

\medskip
\noindent
Theorem~\ref{thm:main-sa} and Theorem~\ref{thm:main-nsa} are not stated for $d=1,$ as a matter of fact the one dimensional case is rather special and it is treated separately. In $d=1$ the Lamé operator $-\Delta^\ast$ turns into a scalar differential operator, more precisely it is simply a multiple of the Laplacian
\begin{equation*}
	-\Delta^\ast
	:=-\mu \frac{d^2}{dx^2} -(\lambda + \mu) \frac{d^2}{dx^2}
	=-(\lambda + 2\mu) \frac{d^2}{dx^2}.
\end{equation*}
As a straightforward consequence of the celebrated result of Abramov, Aslanian and Davies for 1D-Schrödinger operators (see~\cite[Thm.~4]{AAD2001}), in~\cite{Cossetti2019} the following result for the one dimensional non self-adjoint Lamé operator was proved.
\begin{theorem}[Thm.~1.1,~\cite{Cossetti2019}]
\label{thm:1d}
	Let $d=1$ and $V\in L^1(\R;\C).$ Then
	\begin{equation*}
		\sigma_\textup{p}(-\Delta^\ast + V)\subset
		\Big\{z\in \C\colon |z|^{1/2}\leq \tfrac{1}{2\sqrt{\lambda+2\mu}} \|V\|_{L^1(\R)} \Big\}.
	\end{equation*} 
\end{theorem}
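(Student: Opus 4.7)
\textbf{Proof plan for Theorem~\ref{thm:1d}.} The plan is to reduce the statement directly to the Abramov--Aslanyan--Davies eigenvalue bound for one-dimensional Schr\"odinger operators, exploiting the observation, already highlighted in the excerpt, that in dimension one the Lam\'e operator collapses to a scalar multiple of the Laplacian:
\begin{equation*}
  -\Delta^\ast = -(\lambda+2\mu)\frac{d^2}{dx^2}.
\end{equation*}

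First, I would start from an eigenpair $(z,u)$ with $u\in H^2(\R;\C)\setminus\{0\}$ satisfying $(-\Delta^\ast + V)u = zu$. Dividing the pointwise equation by the positive constant $\lambda+2\mu$ (which is strictly positive by the ellipticity assumption~\eqref{eq:ellipticity}, since $\mu>0$ and $\lambda+\mu\geq 0$), the problem is rewritten as
\begin{equation*}
  -u'' + \widetilde V\, u = \widetilde z\, u,
  \qquad
  \widetilde V := \frac{V}{\lambda+2\mu},\quad \widetilde z := \frac{z}{\lambda+2\mu}.
\end{equation*}
This is precisely the eigenvalue problem for the (non self-adjoint) one-dimensional Schr\"odinger operator $-d^2/dx^2 + \widetilde V$ with $\widetilde V\in L^1(\R;\C)$, on its natural domain $H^2(\R;\C)$.

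Second, I would invoke the Abramov--Aslanyan--Davies theorem~\cite{AAD2001} in its classical formulation: any eigenvalue $\widetilde z\in\C$ of $-d^2/dx^2 + \widetilde V$ with $\widetilde V\in L^1(\R;\C)$ satisfies $|\widetilde z|^{1/2}\leq \tfrac12\|\widetilde V\|_{L^1(\R)}$. Substituting back the rescaling and using $\|\widetilde V\|_{L^1} = (\lambda+2\mu)^{-1}\|V\|_{L^1}$, one obtains
\begin{equation*}
  \frac{|z|^{1/2}}{(\lambda+2\mu)^{1/2}} \leq \frac{1}{2(\lambda+2\mu)}\|V\|_{L^1(\R)},
\end{equation*}
which after multiplication by $(\lambda+2\mu)^{1/2}$ gives exactly the claimed bound $|z|^{1/2}\leq \tfrac{1}{2\sqrt{\lambda+2\mu}}\|V\|_{L^1(\R)}$.

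There is essentially no obstacle here beyond a bookkeeping check: one must verify that the domain of the one-dimensional perturbed Lam\'e operator coincides with the domain on which the AAD result is stated (namely $H^2(\R;\C)$ with $V\in L^1$ treated as a form or relatively bounded perturbation), so that a genuine eigenvalue of the former is indeed an eigenvalue of the rescaled Schr\"odinger operator. Once this identification is in place, the theorem is a one-line consequence of AAD and the explicit scalar factor $\lambda+2\mu$.
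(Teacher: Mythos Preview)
Your reduction is exactly the one the paper has in mind: it introduces Theorem~\ref{thm:1d} as ``a straightforward consequence of the celebrated result of Abramov, Aslanian and Davies'' and does not give a separate proof beyond that remark, so your rescaling $\widetilde V=V/(\lambda+2\mu)$, $\widetilde z=z/(\lambda+2\mu)$ followed by the AAD bound $|\widetilde z|^{1/2}\le \tfrac12\|\widetilde V\|_{L^1}$ is precisely the intended argument. The only point to tighten is your phrase ``any eigenvalue $\widetilde z\in\C$'': the original AAD theorem is stated for eigenvalues in $\C\setminus[0,\infty)$, and the paper explicitly flags (in the Remark following the statement) that the extension to embedded eigenvalues requires an additional input, citing~\cite[Cor.~2.16]{DN2002}; you should add that reference rather than assume AAD covers the full point spectrum directly.
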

\begin{remark}
	We stress that Theorem 1.1 in~\cite{Cossetti2019} was stated only for eigenvalues outside the essential spectrum, namely for $z\in \C\setminus [0,\infty).$ Nevertheless, it is easy to show that embedded eigenvalues can be covered as well (see~\cite[Cor.~2.16]{DN2002}).
\end{remark}

\noindent
In the self-adjoint case, as an immediate consequence of the Lieb-Thirring inequalities (\hspace{-0.007cm}\cite{Laptev2012,LW2000}) valid for the Schrödinger operators, one has the following result.
\begin{theorem}\label{thm:1dLT}
	Let $d=1$ and $V_-\in L^1(\R;\R).$ Then
	\begin{equation}\label{eq:1dLT}
		\sigma_\textup{p}(-\Delta^\ast + V) \subset 
		\Big\{z\in \C\colon |z|^{1/2}\leq \tfrac{1}{2\sqrt{\lambda+2\mu}} \|V_-\|_{L^1(\R)} \Big\}.
	\end{equation}
\end{theorem}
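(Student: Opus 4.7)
The plan is to reduce the claim to the one-dimensional Schrödinger case by exploiting, as already observed in the text, the fact that in $d=1$ the Lamé operator collapses to a scalar multiple of the Laplacian, $-\Delta^\ast = -(\lambda+2\mu)\,d^2/dx^2$. Setting $\widetilde{V}:=V/(\lambda+2\mu)$, one then has the operator identity
$$-\Delta^\ast + V = (\lambda+2\mu)\left(-\frac{d^2}{dx^2} + \widetilde{V}\right),$$
so that $z$ is an eigenvalue of $-\Delta^\ast + V$ if and only if $\zeta := z/(\lambda+2\mu)$ is an eigenvalue of the scalar Schrödinger operator $H_{\widetilde{V}} := -d^2/dx^2 + \widetilde{V}$ on $\R$.

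The next step is to invoke the sharp one-dimensional Lieb--Thirring inequality at moment $\gamma=1/2$ (see \cite{LW2000,Laptev2012}): whenever $\widetilde{V}$ is real with $\widetilde{V}_- \in L^1(\R)$, every negative eigenvalue $\zeta$ of $H_{\widetilde{V}}$ satisfies the individual bound
$$|\zeta|^{1/2} \leq \sum_j |\zeta_j|^{1/2} \leq \frac{1}{2}\|\widetilde{V}_-\|_{L^1(\R)} = \frac{1}{2(\lambda+2\mu)}\|V_-\|_{L^1(\R)},$$
the index $j$ running over the negative eigenvalues of $H_{\widetilde V}$. Multiplying by $(\lambda+2\mu)^{1/2}$ and invoking the identity $|z|^{1/2}=(\lambda+2\mu)^{1/2}|\zeta|^{1/2}$ yields exactly \eqref{eq:1dLT} for any negative eigenvalue $z$ of $-\Delta^\ast + V$.

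The only remaining point is that $\sigma_\textup{p}(-\Delta^\ast + V)$ might a priori contain nonnegative elements, since the essential spectrum of the self-adjoint operator $-\Delta^\ast + V$ fills $[0,\infty)$ under the form-integrability of $V_-$. However, standard one-dimensional absence-of-embedded-eigenvalues results of Kato--Agmon type rule out eigenvalues in $[0,\infty)$, so the estimate established above exhausts the point spectrum and yields \eqref{eq:1dLT}. I would expect this exclusion of positive embedded eigenvalues to be the most delicate ingredient beyond the rescaling, since the bound on negative eigenvalues is a direct consequence of Lieb--Thirring once the scalar Schrödinger reduction has been performed.
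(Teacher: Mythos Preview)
Your approach is exactly what the paper has in mind: the paper does not give a detailed proof of this theorem but simply declares it ``an immediate consequence of the Lieb--Thirring inequalities'' for Schr\"odinger operators, and your rescaling $-\Delta^\ast+V=(\lambda+2\mu)\bigl(-d^2/dx^2+\widetilde V\bigr)$ followed by the sharp $\gamma=1/2$ Lieb--Thirring bound is precisely that consequence, with the arithmetic carried out correctly.

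One caveat worth flagging, since you yourself single it out as the delicate point: the absence of nonnegative embedded eigenvalues that you invoke via ``Kato--Agmon type'' results generally requires more than the stated hypothesis $V_-\in L^1$ alone. In one dimension the classical exclusion of positive eigenvalues (via Jost-solution asymptotics) uses $V\in L^1(\R)$, i.e.\ integrability of both parts; with only $V_-\in L^1$ the positive part is unconstrained and the essential spectrum need not even be $[0,\infty)$. The paper does not address this subtlety either, so this is not a discrepancy between your argument and the paper's --- rather, it is a place where the hypothesis as written is arguably too weak for the conclusion about the \emph{full} point spectrum, and one should read the theorem as implicitly carrying the standing decay/integrability assumptions used elsewhere in the paper (cf.\ the hypotheses in Theorems~\ref{thm:1d-sa} and~\ref{thm:1d-sa-bound}).
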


\noindent
Theorem~\ref{thm:1d} and Theorem~\ref{thm:1dLT} together with Lemma~\ref{lemma:correspondence} below allow to deduce properties on the point spectrum of the one dimensional damped elastic wave operator $A_a^\ast.$
Differently from the higher dimensional setting, in $d=1$ Theorem~\ref{thm:1d} does not entail any quantitative bound on the location in the complex plane of the eigenvalues of $A_a^\ast,$ on the other hand it provides an explicit smallness condition on the size of the $L^1$-norm of the damping such that $A_a^\ast$ does not have eigenvalues. More precisely we have the following result.
\begin{theorem}\label{thm:1d-nsa}
	Let $d=1$ and $a\in L^1(\R;\C).$ If $\|a\|_{L^1(\R)}<2\sqrt{\lambda + 2\mu},$ then $\sigma_\textup{p}(A_a^\ast)=\varnothing.$ Moreover, the constant $2\sqrt{\lambda+ 2\mu}$ is optimal.
\end{theorem}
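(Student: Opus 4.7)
The approach is to reduce to the scalar perturbed Lam\'e eigenvalue problem via the one dimensional instance of Lemma~\ref{lemma:correspondence}. Writing $A_a^\ast(u,v)^T = z(u,v)^T$ and reading off $v=zu$ from the first row, the second row becomes the scalar equation
\begin{equation*}
	(-\Delta^\ast + za)u = -z^2 u,\qquad u\in H^2(\R),
\end{equation*}
so that $z\in\sigma_{\textup{p}}(A_a^\ast)$ if and only if $-z^2\in\sigma_{\textup{p}}(-\Delta^\ast + V_z)$ with the $z$-dependent potential $V_z:=za\in L^1(\R;\C)$. Given this identification, the nonzero case follows immediately from Theorem~\ref{thm:1d}: applying it to $V_z$ yields
\begin{equation*}
	|z|=|{-z^2}|^{1/2}\leq \tfrac{1}{2\sqrt{\lambda+2\mu}}\,\|za\|_{L^1(\R)}=\tfrac{|z|}{2\sqrt{\lambda+2\mu}}\,\|a\|_{L^1(\R)},
\end{equation*}
so dividing by $|z|>0$ forces $\|a\|_{L^1(\R)}\geq 2\sqrt{\lambda+2\mu}$; contrapositively, the smallness hypothesis excludes every nonzero eigenvalue. (The use of Theorem~\ref{thm:1d} here requires the remark after it, which extends the bound to embedded eigenvalues.)

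The value $z=0$ is ruled out directly: the first row gives $v=0$ and the second row reduces to $-(\lambda+2\mu)u''=0$, whose only $H^2(\R)$ solution is $u\equiv 0$. For the sharpness statement, the idea is to realize $A_a^\ast$ at the threshold $\|a\|_{L^1(\R)}=2\sqrt{\lambda+2\mu}$ by approximating the formal point interaction $a=\mp\,2\sqrt{\lambda+2\mu}\,\delta_0$. Plugging the ansatz $u(x)=e^{-\kappa|x|}$ into the reduced equation $(-\Delta^\ast + zc_0\delta_0)u=-z^2 u$ produces the jump condition $2(\lambda+2\mu)\kappa=-zc_0$ coupled with the dispersion relation $z^2=(\lambda+2\mu)\kappa^2$; these are compatible precisely when $c_0^2=4(\lambda+2\mu)$, i.e.\ at $\|a\|_{L^1(\R)}=2\sqrt{\lambda+2\mu}$, for every $z$ with the appropriate sign of $\mathrm{Re}(z)$. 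Regularising by, say, $a_n:=c_0\,n\,\mathbf{1}_{[-1/(2n),1/(2n)]}$ and passing to the limit via norm-resolvent convergence of short-range one dimensional potentials yields eigenvalues of $A_{a_n}^\ast$ clustering near the $\delta$-limit, showing that no smaller constant than $2\sqrt{\lambda+2\mu}$ can guarantee emptiness of the point spectrum.

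The delicate point is precisely this last passage: one must verify that the $\delta$-extremiser actually persists under a genuine $L^\infty$ regularisation. A cleaner alternative avoids point interactions entirely and invokes the sharpness of Theorem~\ref{thm:1d} (equivalently of the Abramov--Aslanian--Davies bound~\cite{AAD2001}): take a sequence $V_n\in L^\infty\cap L^1(\R;\C)$ together with eigenvalues $-z_n^2\in\sigma_{\textup{p}}(-\Delta^\ast + V_n)$ saturating that bound, and set $a_n:=V_n/z_n$. Then $a_n\in L^\infty\cap L^1(\R;\C)$, $z_n\in\sigma_{\textup{p}}(A_{a_n}^\ast)$ by the correspondence, and $\|a_n\|_{L^1(\R)}=\|V_n\|_{L^1(\R)}/|z_n|\to 2\sqrt{\lambda+2\mu}$, which is precisely the optimality required.
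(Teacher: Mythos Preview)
Your argument for $\sigma_{\textup{p}}(A_a^\ast)=\varnothing$ is exactly the paper's: apply Lemma~\ref{lemma:correspondence}, invoke Theorem~\ref{thm:1d} with $V=za$, divide by $|z|\neq 0$ (your direct exclusion of $z=0$ matches Remark~\ref{rmk:ev0}), and read off the contradiction under the smallness hypothesis.

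For optimality the paper simply points to~\cite[Thm.~4]{KK2020}, whereas you supply two routes. Your first (the $\delta$-extremiser plus regularisation) is precisely the construction of~\cite{KK2020} transplanted to the Lam\'e setting, and your formal computation is correct; as you note, the honest work lies in the norm-resolvent limit. Your second route is a genuine shortcut not in the paper: pulling back the known sharpness of Theorem~\ref{thm:1d} via $a_n:=V_n/z_n$ converts any saturating sequence for the Abramov--Aslanian--Davies bound directly into damping coefficients with $\|a_n\|_{L^1}\to 2\sqrt{\lambda+2\mu}$ and $z_n\in\sigma_{\textup{p}}(A_{a_n}^\ast)$. This avoids redoing the point-interaction analysis altogether and is the cleaner choice; it works because every complex number can be written as $-z^2$, so the correspondence in Lemma~\ref{lemma:correspondence} is reversible at the level of sequences.
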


\noindent
In the self-adjoint situation it holds true a slightly different result compared to the ones introduced so far.
\begin{theorem}\label{thm:1d-sa}
Let $d=1$ and assume that $a$ is real-valued and satisfies 
\begin{equation}\label{eq:assumptions}
	\int_{\R} |x||a(x)|\, dx<\infty
	\qquad \text{and} \qquad
	\lim_{R\to \infty} \|a\|_{L^\infty(\R\setminus B_R(0))}=0.
\end{equation}
Let $z$ be a real eigenvalue of $A_a^\ast.$ If $z>0$ and $\int_{\R} a<-4\sqrt{\lambda + 2\mu}$ (or $z<0$ and $\int_{\R} a>4\sqrt{\lambda + 2\mu}$), then
\begin{equation*}
	|z|\geq (\lambda + 2\mu) \Bigg(\int_{\R} |x||a(x)|\, dx \Bigg)^{-1}.
\end{equation*}	
\end{theorem}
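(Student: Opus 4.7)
The plan is to reduce the eigenvalue problem via Lemma~\ref{lemma:correspondence} to a self-adjoint one-dimensional Schrödinger-type problem, then exploit the integral representation of the eigenfunction together with first-moment identities, using the strong-coupling hypothesis $\int_\R a < -4\sqrt{\lambda + 2\mu}$ to extract a lower bound on $|z|$. By Lemma~\ref{lemma:correspondence}, a nonzero real $z$ is an eigenvalue of $A_a^\ast$ if and only if there exists $u \in H^2(\R)\setminus\{0\}$ with $(-\Delta^\ast + za)u = -z^2 u$. Writing $\nu := \lambda + 2\mu$ and $\kappa := |z|/\sqrt{\nu}$, this becomes $u'' - \kappa^2 u = (z/\nu) au$, and $u$ decays exponentially at $\pm\infty$ thanks to $u \in H^2$ and the $L^\infty$-decay of $a$. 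The Green's function for $-\partial_x^2 + \kappa^2$ yields
\[
u(x) = -\frac{1}{2\sqrt{\nu}}\int_\R e^{-\kappa|x-y|}\,a(y)\,u(y)\,dy,
\]
and differentiating under the integral gives the Lipschitz bound $\|u'\|_{L^\infty} \leq \kappa \|u\|_{L^\infty}\|a\|_{L^1}/(2\sqrt{\nu})$.

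Next, I would establish two moment identities by integrating the eigenvalue equation against $1$ and against $|x|$ respectively (using the decay of $u$ and $u'$ to eliminate boundary terms and the elementary computation $\int_\R |x|\, u''\,dx = 2u(0)$):
\[
\int_\R au\,dx = -z\int_\R u\,dx, \qquad 2\nu u(0) = z\int_\R |x|\,au\,dx + z^2\int_\R |x|\,u\,dx.
\]
Normalize $\|u\|_{L^\infty} = 1$. The key estimate comes from inserting the decomposition $u(y) = u(0) + (u(y) - u(0))$ into the integral representation evaluated at $x=0$, then dividing by $u(0)$: the hypothesis $\int a < -4\sqrt{\nu}$ guarantees that the leading term $\bigl|1 + (2\sqrt{\nu})^{-1}\int a\bigr|$ exceeds $1$, while the error terms are controlled by $|e^{-\kappa|y|} - 1| \leq \kappa|y|$ and $|u(y) - u(0)| \leq \|u'\|_{L^\infty}|y|$, yielding after tracking constants an inequality of the form $1 \leq C\kappa\int_\R |x||a(x)|\,dx$ that rearranges to $|z| = \sqrt{\nu}\,\kappa \geq \nu/\int_\R|x||a|\,dx$. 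The case $z<0, \int a > 4\sqrt{\nu}$ follows by the symmetry $a \mapsto -a$ (which, by the structure of $A_a^\ast$, flips the sign of $z$ in the correspondence).

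The main obstacle is controlling $|u(0)|$ (or, equivalently, choosing the right base-point at which to evaluate the integral representation), since a naive estimate introduces nuisance contributions of the form $|x_\ast|\|a\|_{L^1}$, where $x_\ast$ is a global maximum of $u$. The decay hypothesis $\lim_{R\to\infty}\|a\|_{L^\infty(\R\setminus B_R(0))} = 0$ is crucial here: at $x_\ast$, the pointwise equation $u''(x_\ast) = (z/\nu)(a(x_\ast) + z)u(x_\ast)$ combined with $u''(x_\ast) \leq 0$ (at a maximum) forces $a(x_\ast) \leq -|z|$, which with the $L^\infty$-tail decay of $a$ bounds $|x_\ast|$ in terms of $|z|$, allowing the nuisance terms to be absorbed into $\int_\R|x||a|$. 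The delicate bookkeeping is to keep track of the precise constants so as to obtain the factor $\nu$ and not a weaker multiple thereof in the stated bound.
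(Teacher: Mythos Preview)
Your approach is quite different from the paper's and, as written, has a genuine gap. The paper argues by contrapositive: assuming $|z|<(\lambda+2\mu)\bigl(\int_\R|x||a|\bigr)^{-1}$, the Bargmann-type bound $N\le 1+\frac{|z|}{\lambda+2\mu}\int_\R|x||a|$ forces the self-adjoint operator $-\Delta^\ast+za$ to have \emph{exactly one} negative eigenvalue (the condition $z\int_\R a<0$ together with the $L^\infty$-decay of $a$ guarantees there is at least one). That single eigenvalue is $-z^2$ by Lemma~\ref{lemma:correspondence}, and the Buslaev--Faddeev--Zakharov trace formula $\sum_n|\lambda_n^\ast(za)|^{1/2}\ge -\tfrac{z}{4\sqrt{\lambda+2\mu}}\int_\R a$ then yields $|z|\ge -\tfrac{z}{4\sqrt{\lambda+2\mu}}\int_\R a$, i.e.\ $\int_\R a\ge -4\sqrt{\lambda+2\mu}$ for $z>0$, which is the contrapositive of the stated hypothesis. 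The sharp constant $\lambda+2\mu$ comes directly from the Bargmann bound; no bookkeeping is needed.

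Your Green's-function route runs into the obstacle you yourself flag, and the resolution you sketch does not close. From $a(x_\ast)\le -|z|$ and the \emph{qualitative} hypothesis $\lim_{R\to\infty}\|a\|_{L^\infty(\R\setminus B_R)}=0$ you only get $|x_\ast|\le R(|z|)$ for some function $R$ depending on the (unspecified) decay rate of $a$; this is not a quantitative bound and cannot be converted into control of $|x_\ast|\,\|a\|_{L^1}$ by $\int_\R|x||a|$. Even granting some control there, tracking your estimate gives at best $|z|\ge 2(\lambda+2\mu)\bigl[(1+\tfrac{\|a\|_{L^1}}{2\sqrt{\lambda+2\mu}})\int_\R|x_\ast-y||a(y)|\,dy\bigr]^{-1}$, which involves the shifted moment $\int|x_\ast-y||a|$ and an extra factor in $\|a\|_{L^1}$, not the clean $\nu/\int|x||a|$ of the statement. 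The moment identities you derive are also not used in your key estimate, which suggests the argument has not yet found its right structure. I would recommend switching to the paper's contrapositive strategy via the Bargmann bound and the trace formula.
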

\noindent
Moreover the following quantitative bound on the location of eigenvalues holds.
\begin{theorem}\label{thm:1d-sa-bound}
	Let $d=1$ and assume that $a$ is real-valued and satisfies~\eqref{eq:assumptions}. Moreover, assume 
	\begin{equation*}
		|z|<(\lambda+ 2\mu)\Bigg(\int_{\R} |x||a(x)|\, dx \Bigg)^{-1}.
	\end{equation*}  
	If $z>0$ and $\int_\R a<0$ (respectively, $z<0$ and $\int_\R a>0$), then there exists exactly one $\alpha>0$ satisfying
	\begin{equation*}
 		2\Bigg(\int_\R a_-(x)\, dx\Bigg)^{-1}\leq \alpha \leq -4 \Bigg(\int_\R a(x)\, dx\Bigg)^{-1}
 		\quad \Bigg(\text{respectively, } 2\Bigg(\int_\R a_+(x)\, dx\Bigg)^{-1}\leq \alpha \leq 4 \Bigg(\int_\R a(x)\, dx\Bigg)^{-1} \Bigg) 
	\end{equation*}
	such that $z/\alpha$ is an eigenvalue of $A_{a}^\ast.$
\end{theorem}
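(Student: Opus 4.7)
The plan is to invoke Lemma~\ref{lemma:correspondence}: the condition $z/\alpha\in\sigma_{\textup p}(A_a^\ast)$ is equivalent to the existence of a nontrivial $u\in H^2(\R)$ satisfying the one-dimensional pencil equation
\[
-(\lambda+2\mu)\,u''(x) + (z/\alpha)\,a(x)\,u(x) + (z/\alpha)^2\,u(x) = 0.
\]
Setting $\kappa:=(z/\alpha)/\sqrt{\lambda+2\mu}>0$ and inverting $-\partial_x^2+\kappa^2$ by its explicit Green's function $(2\kappa)^{-1}e^{-\kappa|x-y|}$, this is in turn equivalent to the decaying solvability of the integral equation
\[
u(x)=-\frac{1}{2\sqrt{\lambda+2\mu}}\int_{\R}e^{-\kappa|x-y|}\,a(y)\,u(y)\,dy.
\]
Thus the theorem reduces to identifying, and then quantitatively locating, those $\alpha>0$ for which this integral equation admits a nontrivial solution.

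My approach would be to extract from this integral equation a single scalar self-consistency equation $\mathcal{F}(\alpha)=0$. The natural starting point is the integrated identity $\eta\int u=-\int au$ (obtained by integrating the pencil equation once, using $u\in H^2(\R)$), which fixes the normalization. A perturbative splitting of the kernel, $e^{-\kappa|x-y|}=1-(1-e^{-\kappa|x-y|})$, decomposes the integral operator into a leading rank-one piece $f\mapsto -\tfrac{1}{2\sqrt{\lambda+2\mu}}\int af$ plus a remainder controlled by $|1-e^{-\kappa|x-y|}|\leq\kappa(|x|+|y|)$. The standing hypothesis $|z|\int|x||a(x)|\,dx<\lambda+2\mu$ is tailored to make this remainder strictly subordinate to the rank-one part, which enables a Banach contraction argument in a suitable weighted $L^\infty$-space to produce $u$ as a fixed point provided $\alpha$ solves the scalar equation $\mathcal{F}(\alpha)=0$.

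The two endpoints $\alpha=2/\int a_-$ and $\alpha=-4/\int a$ would then be shown to bracket the unique root of $\mathcal F$. The upper endpoint $-4/\int a$ corresponds at leading order to the rank-one self-consistency $1+\tfrac{\alpha}{2\sqrt{\lambda+2\mu}}\int a=0$ coming from constant test functions; the lower endpoint $2/\int a_-$ reflects a Birman--Schwinger-type bound expressing that a positive (ground-state) eigenfunction can be sustained only by the negative part $a_-$ of the effective potential. Since by hypothesis $\int a<0$, both endpoints are positive and correctly ordered, and the Intermediate Value Theorem applied to $\mathcal F$ then produces a root in the interval $[2/\int a_-,\,-4/\int a]$; the contraction estimate coming from the smallness of $|z|$ simultaneously forbids a second root, yielding uniqueness. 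The parallel case $z<0$, $\int a>0$ follows by the obvious symmetry $a\mapsto -a$, $z\mapsto -z$ of the equation.

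The main obstacle I anticipate is the careful derivation of the scalar equation $\mathcal{F}$ together with the verification of the sign change at the lower endpoint $\alpha=2/\int a_-$: unlike the upper endpoint, which is directly suggested by the rank-one leading order, the lower endpoint arises from a finer cancellation between $a_+$ and $a_-$ and requires the full weighted integrability condition~\eqref{eq:assumptions} (rather than mere $L^1$-control of $a$) in order to be established rigorously.
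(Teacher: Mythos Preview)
Your approach is genuinely different from the paper's, and considerably more laborious. The paper's proof is only a few lines and does not build any scalar function $\mathcal F$ or run a contraction argument. Instead it proceeds exactly as in the proof of Theorem~\ref{thm:1d-sa}: under the standing hypotheses the Bargmann bound~\eqref{eq:NLame} forces $-\Delta^\ast+za$ to have precisely one negative eigenvalue $\lambda_1^\ast(za)$; the Buslaev--Faddeev--Zakharov trace inequality~\eqref{eq:sum} then gives the \emph{lower} bound $|\lambda_1^\ast(za)|^{1/2}\ge -\tfrac{z}{4\sqrt{\lambda+2\mu}}\int_{\R}a$, while the one-dimensional Lieb--Thirring inequality of Theorem~\ref{thm:1dLT} gives the \emph{upper} bound $|\lambda_1^\ast(za)|^{1/2}\le \tfrac{z}{2\sqrt{\lambda+2\mu}}\int_{\R}a_-$. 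The correspondence of Lemma~\ref{lemma:correspondence} converts this two-sided estimate directly into the interval for $\alpha$. Thus both endpoints fall out of classical 1D Schr\"odinger spectral inequalities rather than from any analysis of the resolvent kernel.

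Your Green-function / rank-one plus remainder scheme could plausibly be pushed through, but there is a real gap at exactly the point you flag. The precise endpoints in the statement are not heuristic: the constant $-4(\int a)^{-1}$ encodes the sharp constant $1/4$ in the trace formula, and $2(\int a_-)^{-1}$ encodes the sharp constant $1/2$ in the $\gamma=1/2$ Lieb--Thirring inequality in one dimension. Your ``leading-order self-consistency'' argument for the upper endpoint in fact produces $-2\sqrt{\lambda+2\mu}(\int a)^{-1}$ rather than $-4(\int a)^{-1}$, and your Birman--Schwinger heuristic for the lower endpoint does not single out the constant $2$. Unless your contraction/IVT argument is refined to recover these exact constants (which would essentially amount to reproving the trace formula and the sharp Lieb--Thirring bound from scratch), you will obtain an interval of the right form but with the wrong numerical endpoints. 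The paper avoids this entirely by quoting the inequalities directly.
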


\noindent
The rest of the paper is divided as follows. In the next Section we provide the proof of the preliminary Lemma~\ref{lemma:correspondence} establishing the correspondence between the eigenvalue problem associated to the damped elastic wave operator and the perturbed Lamé operator. Afterwards, in Section~\ref{sec:higher} we show the validity of Theorem~\ref{thm:main-sa} and Theorem~\ref{thm:main-nsa} which hold in higher dimension $d\geq 2.$ The one dimensional case, that is Theorem~\ref{thm:1d-nsa}-Theorem~\ref{thm:1d-sa-bound}, is treated separately in Section~\ref{sec:1d}.
\section{Proofs}
As a starting point we show how the eigenvalue problem associated to the damped elastic wave operator $A_a^\ast$ defined in~\eqref{eq:damped-operator} is related to the one of a perturbed Lamé operator of the form~\eqref{eq:perturbed_Lame}.
\begin{lemma}\label{lemma:correspondence}
Let $d\geq 1$ and assume $a\in L^\infty(\R^d;\C^{d\times d}).$ For every $z\in \C,$
\begin{equation*}
z\in \sigma_\textup{p}(A_a^\ast)
\quad  \Longleftrightarrow \quad 
-z^2\in \sigma_\textup{p}(-\Delta^\ast + za).
\end{equation*} 
\end{lemma}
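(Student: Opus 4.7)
The plan is a direct computation unfolding the eigenvalue equation component-wise, since the $2\times 2$ block structure of $A_a^\ast$ makes the relation between $z$ and $-z^2$ essentially algebraic.

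First I would set $U=(u,v)^T$ with $u\in H^2(\R^d)^d$ and $v\in \dot H^1(\R^d)^d$, and rewrite $A_a^\ast U = zU$ as the pair of equations
\begin{equation*}
v = zu,\qquad \Delta^\ast u - av = zv.
\end{equation*}
For the forward implication ($\Rightarrow$), if $z\in\sigma_{\mathrm p}(A_a^\ast)$ with eigenvector $U\neq 0$, I would substitute the first equation into the second to get $\Delta^\ast u - za\,u = z^2 u$, i.e.\ $(-\Delta^\ast + za)u = -z^2 u$. It remains to check that $u\neq 0$: otherwise $v=zu=0$ and hence $U=0$, a contradiction. Thus $-z^2\in \sigma_{\mathrm p}(-\Delta^\ast + za)$, with $u\in H^2(\R^d)^d$ serving as the eigenvector (which is exactly the domain of the perturbed Lamé operator, since $a\in L^\infty$ is a bounded multiplication operator).

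For the reverse implication ($\Leftarrow$), given $u\in H^2(\R^d)^d$, $u\neq 0$, with $(-\Delta^\ast + za)u = -z^2 u$, I would define $v:=zu$ and $U:=(u,v)^T$. Then $U\neq 0$, and since $u\in H^2 \subset \dot H^1$, we have $v=zu\in \dot H^1(\R^d)^d$ (trivially when $z=0$), so $U\in \mathcal D(A_a^\ast)$. A direct computation
\begin{equation*}
A_a^\ast U = \begin{pmatrix} v \\ \Delta^\ast u - av \end{pmatrix} = \begin{pmatrix} zu \\ \Delta^\ast u - za\,u \end{pmatrix} = \begin{pmatrix} zu \\ z^2 u \end{pmatrix} = z\,U
\end{equation*}
concludes the argument.

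No genuine obstacle is expected: the statement is really the block-matrix analogue of the classical first-order reduction of a second-order ODE. The only mildly delicate point is keeping track of the domains ($H^2\times \dot H^1$ on one side, $H^2$ on the other) and checking that nontriviality of the eigenfunction is preserved in both directions, especially at $z=0$; the identification $v=zu$ makes this transparent once one observes that $u=0$ forces $U=0$.
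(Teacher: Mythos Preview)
Your proof is correct and follows essentially the same approach as the paper: write the block eigenvalue equation as the pair $v=zu$, $\Delta^\ast u - av = zv$, substitute to obtain the Lam\'e eigenvalue equation, and reverse the construction by setting $v:=zu$. Your treatment is in fact slightly more explicit than the paper's on the nontriviality check ($u=0\Rightarrow v=zu=0\Rightarrow U=0$) and on the domain verification $zu\in\dot H^1$.
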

\begin{proof}
	Assume $z\in \sigma_\textup{p}(A_a^\ast),$ then there exists a non-trivial $\Psi=(\psi_1,\psi_2)^T\in\mathcal{D}(A_a^\ast)$ such that $A_a^\ast \Psi=z\Psi.$ In other words, $\psi_1 \in H^2(\R^d)^d,$ $\psi_2\in \dot{H}^1(\R^d)^d$ and $\psi_2=z\psi_1,$ $\Delta^\ast \psi_1 -a \psi_2=z\psi_2.$ Plugging the first equation in the second one gives $-\Delta^\ast \psi_1 +za\psi_1=-z^2\psi_1.$ Since $\psi_1\neq 0,$ then $-z^2\in \sigma_\textup{p}(-\Delta^\ast +za).$ 
	Conversely, assume $-z^2\in  \sigma_\textup{p}(-\Delta^\ast +za),$ then there exists a non-trivial $\psi\in H^2(\R^d)^d$ such that $(-\Delta^\ast +za)\psi=-z^2 \psi.$ Defining $\Psi:=(\psi, z\psi)^T,$ then $\Psi \in \mathcal{D}(A_a^\ast)$ and 
		$(A_a^\ast \Psi)^T=(z \psi, \Delta^\ast \psi - za \psi)=z(\psi, z\psi)=z\Psi^T.$
	Therefore, $z\in \sigma_\textup{p}(A_a^\ast).$
 \end{proof}
 
 \begin{remark} \label{rmk:ev0}
From the validity of Lemma~\ref{lemma:correspondence}, one has that $0\notin \sigma_\textup{p}(A_a^\ast)$ as the spectrum of the unperturbed Lamé operator $-\Delta^\ast + 0 a=-\Delta^\ast$ is purely continuous.
 \end{remark}

\subsection{Higher dimensions $d\geq2:$ Proof of Theorem~\ref{thm:main-sa} and Theorem~\ref{thm:main-nsa}}\label{sec:higher}
With Lemma~\ref{lemma:correspondence} at hands we now show that Theorem~\ref{thm:main-sa} and Theorem~\ref{thm:main-nsa} are consequence of Theorem~\ref{thm:eigenvalue_bound_self} and Theorem~\ref{thm:Lp-result}, respectively. 
 
\begin{proof}[Proof of Theorem~\ref{thm:main-sa}]
 From Lemma~\ref{lemma:correspondence} we know that $z\in \sigma_\textup{p}(A_a^\ast)$ if and only if $-z^2\in \sigma(-\Delta^\ast + za).$ From Theorem~\ref{thm:eigenvalue_bound_self} there exists $c_{\gamma,d,\lambda, \mu}>0$ such that 
 \begin{equation}\label{eq:prel}
 	|z|^{2\gamma}\leq c_{\gamma,d,\lambda,\mu} \|(za)_-\|_{L^{\gamma + \frac{d}{2}}(\R^d)}^{L^{\gamma + \frac{d}{2}}},
 \end{equation}
 where $(za)_-$ is the negative part of $za,$ \emph{i.e.} $(za)_-= z a_+$ if $z\in (-\infty, 0)$ and $(za)_-=za_-$ if $z\in(0,\infty).$ Using this fact in~\eqref{eq:prel} and dividing both sides of~\eqref{eq:prel} by $|z|^{\gamma + d/2}$ ($z\neq 0,$ see Remark~\ref{rmk:ev0}) we obtain~\eqref{eq:bound-sa}.
\end{proof}
 
\medskip
\noindent 
Now we consider the non self-adjoint situation. 
\begin{proof}[Proof of Theorem~\ref{thm:main-nsa}]
The proof of Theorem~\ref{thm:main-nsa} is analogous to the one of Theorem~\ref{thm:main-sa}. Let $z\in \sigma_\textup{p}(A_a^\ast),$ then by Lemma~\ref{lemma:correspondence} $-z^2\in \sigma_\textup{p}(-\Delta^\ast + za).$ Using the eigenvalue bound~\eqref{eq:thesis} then one has
\begin{equation*}
|z|^{2\gamma}\leq c_{\gamma, d,\lambda, \mu} |z|^{\gamma + \frac{d}{2}} \|a\|_{L^{\gamma + \frac{d}{2}}(\R^d)}^{\gamma + \frac{d}{2}},
\end{equation*}
which gives~\eqref{eq:bound-nsa} and concludes the proof. 
\end{proof}

\subsection{1D: Proof of Theorem~\ref{thm:1d-nsa}, Theorem~\ref{thm:1d-sa} and Theorem~\ref{thm:1d-sa-bound}} \label{sec:1d}
We start considering the self-adjoint situation. Let $z\in \R$ and let $\{\lambda_n^\ast(za)\}_{n=1}^N$ denote the sequence of eigenvalues of $-\Delta^\ast + za,$ then the following preliminary lemma on the sum of the square root of the eigenvalues holds. 
\begin{lemma}
	Let $d=1.$ Then 
	\begin{equation}\label{eq:sum}
		\sum_{n=1}^N|\lambda_n^\ast(za)|^{1/2}\geq -\frac{z}{4\sqrt{\lambda+ 2\mu}} \int_\R a(x)\, dx.
	\end{equation}
	Moreover if   
		$\int_{\R} |x||a(x)|\, dx<\infty,$ then
	the following bound on the number $N$ of eigenvalues $\lambda_n^\ast(za)$
	\begin{equation}\label{eq:NLame}
		N\leq 1 + \frac{|z|}{\lambda + 2\mu} \int_{\R} |x||a(x)|\, dx
	\end{equation}
	holds.
\end{lemma}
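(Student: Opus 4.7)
The plan is to reduce both estimates to classical one-dimensional Schrödinger bounds by exploiting that in $d=1$ the Lamé operator is simply a rescaled Laplacian, namely $-\Delta^\ast = -(\lambda+2\mu)\partial_x^2$. First I would divide the eigenvalue equation $(-\Delta^\ast + za)\psi = \lambda_n^\ast(za)\psi$ by $\lambda+2\mu$ to convert it into the 1D Schrödinger eigenvalue problem
\begin{equation*}
    (-\partial_x^2 + W)\psi = E_n \psi,
    \qquad
    W := \frac{za}{\lambda+2\mu},
    \qquad
    E_n := \frac{\lambda_n^\ast(za)}{\lambda+2\mu}.
\end{equation*}
Writing each negative eigenvalue as $E_n = -\kappa_n^2$ with $\kappa_n>0$, the crucial identifications are $|\lambda_n^\ast(za)|^{1/2} = \sqrt{\lambda+2\mu}\,\kappa_n$, while the total number $N$ of negative eigenvalues is unchanged by the rescaling.

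For~\eqref{eq:sum}, I would invoke the first Faddeev--Zakharov trace identity for the 1D Schrödinger operator $-\partial_x^2 + W$ with real $W\in L^1(\R)$,
\begin{equation*}
    \sum_{n=1}^N \kappa_n
    = -\frac{1}{4}\int_\R W(x)\,dx
    + \frac{1}{2\pi}\int_{-\infty}^{\infty}\log\frac{1}{|T(k)|}\,dk,
\end{equation*}
where $T(k)$ denotes the transmission coefficient of $-\partial_x^2+W$. Since unitarity of the scattering matrix forces $|T(k)|\leq 1$, the scattering contribution is non-negative, yielding $\sum\kappa_n \geq -\tfrac14\int_\R W$. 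Multiplying through by $\sqrt{\lambda+2\mu}$ and substituting $W=za/(\lambda+2\mu)$ then gives exactly~\eqref{eq:sum}.

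For~\eqref{eq:NLame}, I would appeal to a Bargmann--Calogero-type upper bound on the number of bound states of a 1D Schrödinger operator on the whole line,
\begin{equation*}
    N \leq 1 + \int_{-\infty}^{\infty}|x|\,W_-(x)\,dx,
\end{equation*}
whose proof typically proceeds via a Sturm oscillation / Prüfer-angle argument applied to the zero-energy solution of $-\psi''+W\psi=0$ (the additive ``$1$'' accounts for the fact that the first bound state exists as soon as $\int W <0$). Using the pointwise majorization $W_-(x)\leq |W(x)| = |z|\,|a(x)|/(\lambda+2\mu)$ and substituting into this bound produces~\eqref{eq:NLame}. The main delicacy I expect is retrieving the precise constants ($\tfrac14$ in the trace identity and the coefficients $1$ appearing twice in the moment bound); once these classical 1D Schrödinger results are invoked in the right form, the rest of the argument is a straightforward rescaling computation.
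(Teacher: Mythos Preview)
Your proposal is correct and follows essentially the same route as the paper: rescale the Lam\'e eigenvalue problem by $\lambda+2\mu$ to a 1D Schr\"odinger problem, apply the Buslaev--Faddeev--Zakharov trace inequality for~\eqref{eq:sum}, and the Bargmann bound for~\eqref{eq:NLame}. Your version is slightly more detailed (you unpack the transmission-coefficient reason behind the trace inequality and use the sharper $W_-$ in Bargmann before majorizing by $|W|$), but the argument is the same.
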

\begin{proof}
	If $\lambda_n^\ast(za)$ is an eigenvalue of $-\Delta^\ast + za,$ then there exists $\psi\in H^2(\R)$ such that $-(\lambda + 2\mu)
	\Delta\psi+ za\psi=\lambda_n^\ast(za)\psi$ or, equivalently, 
	\begin{equation}\label{eq:Schro}
		\Big(-\Delta+ \frac{za}{\lambda + 2\mu}\Big)\psi=\frac{\lambda_n^\ast(za)}{\lambda + 2\mu}\psi.
	\end{equation}  
	Denoting by $\lambda_n(V)$ the eigenvalues of the Schrödinger operator $-\Delta+ V,$ then we conclude that $\lambda_n^\ast(za)$ is an eigenvalue of $-\Delta^\ast + za$ if and only if there exists $n\in \N$ such that $\lambda_n^\ast(za)$ is a multiple of an eigenvalue $\lambda_n(za/(\lambda + 2\mu))$ of the Schrödinger operator $-\Delta + za/(\lambda+ 2\mu),$ more precisely $\lambda_n(za/(\lambda + 2\mu))= \lambda_n^\ast(za)/(\lambda + 2\mu).$ In particular the number of eigenvalues coincides. The Buslaev-Faddeev-Zakharov trace formula (\emph{cf.}~\cite{FZ2016}) for 1D-Schrödinger operator $-\Delta + V$ states that
	\begin{equation*}
		\sum_{n=1}^N |\lambda_n(V)|^{1/2}\geq -\frac{1}{4} \int_{\R} V(x)\, dx,
	\end{equation*}
	this and the correspondence above give immediately~\eqref{eq:sum}.
	
	The Bargmann bound~\cite[Pb.~22]{RSIV} provides a control from above of the number of eigenvalues of the 1D-Schrödinger operator $-\Delta + V$ under the condition $\int_\R |x||V(x)\, dx<\infty.$ More precisely,
	\begin{equation}\label{eq:NSchro}
		N\leq 1 + \int_\R |x||V(x)|\, dx.
	\end{equation}
	Similarly as above (that is using the correspondence between eigenvalues of the Lamé operator $-\Delta^\ast + za$ and of the Schrödinger operator $-\Delta + za/(\lambda+ 2\mu)$) from~\eqref{eq:NSchro} one easily gets~\eqref{eq:NLame}. This concludes the proof.
\end{proof}
\begin{proof}[Proof of Theorem~\ref{thm:1d-sa}]
	Let $z$ be a real eigenvalue of $A_a^\ast,$ in order to prove Theorem~\ref{thm:1d-sa} we will show that if $|z|<(\lambda + 2\mu)\Big(\int_{\R} |x||a(x)|\, dx \Big)^{-1}$ then $\int_{\R} a\geq -4\sqrt{\lambda + 2\mu}$ for $z>0$ and $\int_{\R}a \leq 4\sqrt{\lambda+ 2\mu}$ for $z<0.$ First of all notice that~\eqref{eq:sum} is non-trivial only if $z\int_{\R} a(x)\, dx<0.$ This last condition, in particular is known to be a sufficient condition which guarantees that $\inf \sigma(-\Delta^\ast + za)<0.$ From the decay assumption~\eqref{eq:assumptions}, then it follows that $-\Delta^\ast + za$ posses at least one negative eigenvalue. From the upper bound~\eqref{eq:NLame} it follows that if $|z|<(\lambda + 2\mu)\Big(\int_{\R} |x||a(x)|\, dx \Big)^{-1}$ then $-\Delta^\ast + za$ has exactly one negative eigenvalue $\lambda_1^\ast(za).$ Thus, from~\eqref{eq:sum} and the correspondence in Lemma~\ref{lemma:correspondence} one has   
	\begin{equation}\label{eq:estimate1d}
		|z|=|\lambda_1(za)|^{1/2}\geq -\frac{z}{4\sqrt{\lambda + 2\mu}}\int_\R a(x)\, dx.
	\end{equation}
	This implies $\int_\R a(x)\, dx\geq -4\sqrt{\lambda+ 2\mu}$ for $z>0$ and $\int_\R a(x)\, dx\leq 4\sqrt{\lambda+ 2\mu}$ for $z<0.$
\end{proof}

\begin{proof}[Proof of Theorem~\ref{thm:1d-sa-bound}]
	From the hypotheses, as above, one has that $-\Delta^\ast + za$ posses exactly one negative eigenvalue. The Lieb-Thirring type bound~\eqref{eq:1dLT} in Theorem~\ref{thm:1dLT} and the estimate in~\eqref{eq:estimate1d} give
	\begin{multline*}
		-\frac{z}{4\sqrt{\lambda + 2\mu}} \int_{\R} a(x)\, dx\leq |\lambda_1(za)|^{1/2}\leq \frac{z}{2\sqrt{\lambda + 2\mu}} \int_{\R}a_-(x)\, dx,\\
		\qquad \Big(\text{respectively }
		-\frac{z}{4\sqrt{\lambda + 2\mu}} \int_{\R} a(x)\, dx\leq |\lambda_1(za)|^{1/2}\leq \frac{|z|}{2\sqrt{\lambda + 2\mu}} \int_{\R}a_+(x)\, dx \Big).
	\end{multline*}
	Using the correspondence in Lemma~\ref{lemma:correspondence} the result follows.
\end{proof}

\begin{proof}[Proof of Theorem~\ref{thm:1d-nsa}]
	If $z\in \C$ is an eigenvalue of $A_a^\ast,$ then from Lemma~\ref{lemma:correspondence} $-z^2\in \sigma_\textup{p}(-\Delta^\ast + za).$ Thus, from Theorem~\ref{thm:1d} we have
	\begin{equation*}
		|z|\leq \frac{1}{2\sqrt{\lambda+ 2\mu}} |z|\|a\|_{L^1(\R)}.
	\end{equation*}
	Dividing by $|z|,$ which cannot be zero (see Remark~\ref{rmk:ev0}), one has
		$1\leq \tfrac{1}{2\sqrt{\lambda+ 2\mu}} \|a\|_{L^1(\R)}.$
	If the $L^1$-norm of $a$ is small, namely if $\|a\|_{L^1(\R)}<2\sqrt{\lambda+ 2\mu},$ then we get a contradiction. Thus, $\sigma_\textup{p}(A_a^\ast)=\varnothing.$ The optimality of the result can be proved as in~\cite[Thm.~4]{KK2020}.
\end{proof}

\providecommand{\bysame}{\leavevmode\hbox to3em{\hrulefill}\thinspace}


\end{document}